\newtheorem{Lem}{Lemma}
\newtheorem{Prop}[Lem]{Proposition}
\newtheorem{theorem}[Lem]{Theorem}
\newtheorem{Cor}[Lem]{Corollary}
\newcommand\tr{\mathop{\rm tr}\nolimits}
\newcommand{\R}{\mathbb{R}}
\newcommand{\N}{\mathbb{N}}
\newcommand{\M}{\mathcal{M}}
\newcommand{\RR}{\mathcal{R}}
\renewcommand{\L}{\mathcal{L}}
\newcommand{\A}{\textbf{A}}
\newcommand{\B}{\textbf{B}}
\renewcommand{\epsilon}{\varepsilon}
\begin{document}
%\begin{frontmatter}
 \title{Well posedness for a quasilinear generalisation of the matched microstructure model}
\author{Daniela Treutler}%\fnref{DFG}}
%\address{%
%Institute for Applied Mathematics, Leibniz University Hanover,\\
%Welfengarten 1,
%D 30167 Hanover,
%Germany}
%\ead{treutler@ifam.uni-hannover.de}
%\fntext[DFG]{Supported by GRK 1463 of the DFG (German Research Foundation)}
\maketitle
\begin{abstract}
We consider a generalisation to quasilinear systems of the matched microstructure model. The proof of well posedness in a strong Sobolev setting is based on an approach via maximal regularity.
\let\thefootnote\relax\footnotetext{
\textbf{Mathematics Subject Classification.} AMS 35K55, 35K59}
\let\thefootnote\relax\footnotetext{\textbf{Keywords.} two-scale model, double porosity, quasilinear system }
\end{abstract}

%\end{frontmatter}
\section{Introduction}
In this article we consider a quasilinear version of the matched microstructure model as it was introduced in \cite{ADH}. An approach to a quasilinear parabolic system was already described in \cite{ShoWa2} while the elliptic case is treated in \cite{ShoWa3}. The authors write down a weak formulation and use the methods of monotone operators. In contrast to that we aim to study strong solutions. In particular, a nonlinear generalisation of the linear model as it was given in \cite{Dani1} is given. The model is based on homogenization results. Moreover we consider a varying geometry in the microstructure. Recent developments for homogenization with varying cell structure have been obtained in \cite{MN1, MN2}. But the construction of function spaces therein differs from our ansatz and is based on \cite{MeiBo}. 
Let $\Omega\subset \R^n$ be a bounded smooth domain and for each $x\in \Omega$ assume that $\Omega_x$ is a given domain as in \cite{Dani1}. Then we investigate the well posedness of the system
\begin{align*}
 \left\{ \begin{array}{rll}
        \partial_t u - div_x( a_1(u) \nabla_x u) =& f_1(x,t,u, U), & x\in \Omega, t\in (0,T],\\
	u(x,t) =& 0, & x\in  \Gamma, t\in (0,T],\\
	u(t=0) =& u_0,\\
	\partial_t U - div_z(a_2(u) \nabla_z U) =& f_2(x,t,u, U), & z \in \Omega_x, x\in \Omega, t\in (0,T],\\
	U(x,z,t) =& u(x,t), & z\in \partial \Omega_x, x\in       \Omega, t\in (0,T],\\
	U(t=0) =& U_0.&  \\
       \end{array}
\right.
\end{align*}
To prove maximal regularity we use a generalisation of the definition of sectorial operators. 
The concept of $\RR$-boundedness and $\RR$-sectorial operators was invented in the last 20 years and is e.g. introduced 
by Denk, Hieber and Pr\"uss in \cite{DHP} or by Kunstmann and Weis in \cite{KuWei}.

In the second part of this work we discuss how we can allow $a_1$ and $a_2$ to depend on $U$. Since we work in an adapted $L_p-L_p$-setting we face some crucial problems to extend our methods to this case. But in the derivation of the model such a dependence appears naturally. It would be interesting for example to investigate the behaviour of non-Newtonian fluids. So we consider a special configuration and expand the result in \cite{ClLi} to show well posedness. 

\textbf{Notation} We use the notation as in \cite{Dani1}. Let $\Phi$ be the mapping from $\Omega$ and the unit ball $B = B(0,1) \subset \R^n$ to $\R^n\times \R^n$ such that it describes the cells $\Omega_x =\Phi(x,B)$. Further let $Q= \Phi(\Omega\times B)$. Let $\Phi_*, \Phi^*$ be the push forward and pullback operators. We will use the transformed function spaces
$$L_p(\Omega, W_p^s(\Omega_x)) = \Phi^* L_p(\Omega, W_p^s(B))$$ and the transformed trace $$\tr :L_p(\Omega, W_p^1(\Omega_x)) \to L_p(\Omega, W_p^{1-\frac1p}(\Omega_x))$$
with the corresponding right inverse $R$. Moreover we proved in \cite{Dani1} some interpolation results. Let $$D_0(u) = \{u\in L_p(\Omega, W_p^2(\Omega_x)); \tr U=u\}.$$ We write
\begin{align*}
Y_0&= L_p(\Omega)\times L_p(\Omega, L_p(\Omega_x)),\\
Y_1 &= dom(\A) = \bigcup_{u\in W_p^2(\Omega)\cap W_p^{1,0}(\Omega) } \{u\} \times D_0(u),\\
\textbf{A} (u,U)&=\left(-\Delta_x u, [\Phi_* \mathcal{A}_x \Phi^* \hat{U}(x)] \right),&& \text{for } (u,U)\in dom(\textbf{A}),
\end{align*}
where $\mathcal{A}_x$ is a transformation of the Laplace operator. Given $0<\Theta <1$, the real interpolation space has the form
\begin{equation}\label{Interpol} Y_{\theta, p}:= (Y_0,Y_1)_{\theta, p}  = \bigcup_{\begin{subarray}{l} u\in W_p^{2\theta} (\Omega)\\
 \quad \cap \ker \tr_\Gamma \end{subarray}}  \{u\} \times \{U\in L_p(\Omega, W_p^{2\theta}(\Omega_x)); \tr U = u\}.\end{equation}
Finally with $g(x)$ we denote the Riemannian metric on $B$ induced by the transformation.  
\section{Quasilinear Operators}
We use Nemytskii operators to generalize the above version of the operator $\A$. Given $2\leq p<\infty$, we prove maximal $L_p$-regularity in case that the initial data is sufficiently regular. This allows us to treat the original problem as an abstract quasilinear initial value problem. The proof of local existence then is based on a result of Cl\'ement and Li \cite{ClLi}.
First we define a Nemytskii operator $a_1$. Given $\tilde{a}_1 \in C^\infty(\overline{\Omega}\times \R)$, we set
\begin{align*}
&a_1(v)(x) := \tilde{a}_1(x,v(x)),\\
& a_1(v) \in C(\overline{\Omega}), \qquad \text{ for } v\in C(\overline{\Omega}).\end{align*}
Hence given $v\in C^1(\overline{\Omega})$, we define $A_1$ in the following way
\begin{align*}
 dom(A_1(v)) & = W_p^2(\Omega)\cap W_p^{1,0}(\Omega),\\
 A_1(v) u &= -div_x (a_1(v) \nabla_x u), && \text{for all }u \in \ dom(A_1(v)).
\end{align*}
Assume that there is $\eta >0$, such that
$$\tilde{a}_1(x,r) \geq \eta,  \qquad \text{ for all }(x,r) \in \overline{\Omega}\times \R.$$ 
Then $A_1(v)$ is a strongly elliptic operator for all $v\in C^1(\Omega)$. To define the operator acting in the cells we need a further function $\tilde{a}_2 \in C^\infty( Q\times \R)$, $r\in \R$. We set
\begin{align*}
 \tilde{b}_2(r)&:= \Phi^* \tilde{a}_2(r),\\
b_2(v)(x,y) &:= \tilde{b}_2((x,y), v(x)), \qquad \text{ for all }(x,y)\in \Omega\times B.
\end{align*}
Due to the assumptions on $\Phi$, there is $\alpha >0$, such that $\tilde{b}_2 \in C^{2+\alpha}(\Omega\times B\times \R)$ and thus $b_2\in C^1(\Omega\times B)$.
For any $(x,y) \in \Omega\times B$, $r\in \R_+$ assume that
$$ \tilde{b}_2((x,y),r) \geq \eta.$$
This leads to a strongly elliptic operator for fixed $x\in \Omega$. Let $\{g^{ij}(x)\}$ be the metric induced by the transformation of $B$ to $\Omega_x$. For $u\in W_p^2(B)$ we set
\begin{align*}
 \mathcal{A}_x(v(x)) u = -\frac{1}{\sqrt{|g(x)|}} \sum_{i,j} \partial_{y_i} \left( b_2(v)(x) \sqrt{|g(x)|} g^{ij}(x) \partial_{y_j} u\right). 
\end{align*}
Given $v\in C^1(\overline{\Omega})$, we define $\textbf{A}_2(v)$ by
\begin{align*}
  dom(\A_2(v)) & = \left\{U\in L_p(\Omega, W_p^2(\Omega_x)); \tr U =0 \right\},\\
 \A_2(v) U &= \Phi_* \left[ \mathcal{A}_x(v(x)) \hat{V}\right] && U \in dom(\A_2(v)).
\end{align*}
Here $V= \Phi^* U$ and $\hat{V}$ is a representative of $V$ such that $\hat{V}(x) \in W_p^2(B)$ for all $x\in \Omega$.
\begin{Lem}\label{Hilfe0}
 Let $v\in C^1(\Omega)$ be given. Then $\A_2(v)$ is a well defined, closed, densly defined, sectorial operator on $L_p(\Omega, L_p(\Omega_x))$.
\end{Lem}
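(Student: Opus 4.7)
The plan is to exploit the fibred structure of $\A_2(v)$: after pulling back by $\Phi^*$, $\A_2(v)$ becomes the operator $\tilde{\A}_2(v)$ on $L_p(\Omega; L_p(B))$ acting by $(\tilde{\A}_2(v) V)(x) = \mathcal{A}_x(v(x)) V(x,\cdot)$, with domain consisting of those $V \in L_p(\Omega; W_p^2(B))$ satisfying $V(x,\cdot)|_{\partial B} = 0$ for a.e.\ $x$. Since $\Phi^*$ is a topological isomorphism between the relevant spaces, it suffices to prove the four properties for $\tilde{\A}_2(v)$.

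The first step is to treat a single fibre. For each fixed $x \in \overline{\Omega}$, $\mathcal{A}_x(v(x))$ is a second-order uniformly elliptic differential operator on $B$ with $C^{1+\alpha}$ coefficients (by the regularity of $\tilde{b}_2$, of $v$, and smoothness of $\Phi$) and Dirichlet boundary condition on $\partial B$. Standard $L_p$ elliptic theory then gives that $\mathcal{A}_x(v(x))$ is closed, densely defined and sectorial on $L_p(B)$ with domain $W_p^2(B)\cap W_p^{1,0}(B)$.

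The crucial step is uniformity in $x$. Since $b_2(v)(x,\cdot)$ depends continuously on $x \in \overline{\Omega}$ in $C^1(\overline{B})$, the ellipticity constant $\eta$ is uniform and the $C^1$-norms of the coefficients are uniformly controlled in $x$. Together with compactness of $\overline{\Omega}$ this yields constants $\mu_0 \geq 0$, $M \geq 1$ and an angle $\omega < \pi$ such that
\[\|\lambda(\lambda + \mu_0 - \mathcal{A}_x(v(x)))^{-1}\|_{L_p(B) \to L_p(B)} \leq M, \qquad \lambda \in \Sigma_\omega,\; x \in \overline{\Omega}.\]
With this in hand, the resolvent of $\tilde{\A}_2(v) - \mu_0$ can be constructed fibrewise by $(R(\lambda) W)(x) := (\lambda + \mu_0 - \mathcal{A}_x(v(x)))^{-1} W(x,\cdot)$. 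Continuous dependence of the resolvent on $x$ (perturbation theory applied to continuously varying coefficients) yields measurability of $R(\lambda) W$ for continuous $W$, extended to all $W\in L_p(\Omega; L_p(B))$ by density, and integrating the fibrewise bound in $x$ gives the required sectorial estimate. Closedness of $\tilde{\A}_2(v)$ is then automatic because $R(\lambda)$ is a bounded two-sided inverse of $\lambda + \mu_0 - \tilde{\A}_2(v)$, and density of the domain follows from the density of $C_c^\infty(\Omega) \otimes (C^\infty(\overline{B}) \cap \ker \tr_{\partial B})$ in $L_p(\Omega; L_p(B))$. Well-definedness (independence of the representative $\hat{V}$) is immediate since two representatives differ on an $x$-null set.

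The main obstacle is the uniformity of the sectorial estimate in $x\in\overline{\Omega}$; everything else is Fubini-type bookkeeping. Uniformity reduces to the standard fact that the resolvent constants in the scalar elliptic $L_p$-theory on $B$ depend only on the ellipticity constant and the $C^1$-norm of the coefficients, which, combined with continuity of $x\mapsto (b_2(v)(x,\cdot), g^{ij}(x))$ and compactness of $\overline{\Omega}$, provides the uniform bound.
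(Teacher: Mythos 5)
Your proof is correct and follows the same fibrewise strategy as the paper's: pull back by $\Phi^*$ to an operator on $L_p(\Omega; L_p(B))$, establish uniform sectorial resolvent bounds for the fibre operators $\mathcal{A}_x(v(x))$ from the uniformly controlled ellipticity and $C^1$-coefficient norms on the compact set $\overline{\Omega}$, and assemble the resolvent of the full operator fibre by fibre. The only difference is one of bookkeeping: the paper delegates the assembly (well-definedness, closedness, and sectoriality of the fibred operator $\B$) to Lemmas 2 and 3 of \cite{Dani1}, whereas you reconstruct the content of those lemmas by hand — measurability of $x\mapsto R(\lambda)W(x)$ via continuous $x$-dependence plus density, closedness from the existence of a bounded two-sided inverse, and dense domain from tensor products.
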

\begin{proof}
Let $v\in C^1(\overline{\Omega})$ and $x\in \overline{\Omega}$ be given. First we consider the following operator $B_x$, 
\begin{align*}
 dom(B_x) &= W_p^2(B)\cap W_p^{1,0}(B)=: W_B,\\
B_x u &= \mathcal{A}_x(v(x)) u, && \text{for }u \in W_B.
\end{align*}
Then $B_x$ is a closed operator in $L_p(B)$. The function $v$ as well as $ \Phi(x)$ is in $C(\overline{\Omega})$. Hence the map $x\mapsto B_x$ is continuous from $\overline{\Omega}$ into $\L(W_B, L_p(B))$. Further the domain of definition of $B_x$ is the same for all $x\in \overline{\Omega}$. So Lemma 2 from \cite{Dani1} can be applied. It shows that the operator $\B$ defined by
\begin{align*}
 dom(\B) &= \{V\in L_p(\Omega, W_p^2(B)); \tr_S V=0\},\\
\B V &= \left[ \mathcal{A}_x(v(x)) \hat{V}(x)\right],
\end{align*}
is well defined and closed. The trace $\tr_S$ maps from $L_p(\Omega, W_p^1(B))$ to $L_p(\Omega, W_p^{1-\frac1p} (S))$. Again $\B$ is a densely defined operator. It holds
\begin{align}
 \B:= \Phi^* \A_2(v) \Phi_*.\label{25}
\end{align}
Since $b_2(v) \in C^{2+\alpha}(\overline{\Omega}\times B)$ and $\Phi^*$, $\Phi_*$ are bounded isomorphisms 
on the bounded domain $\Omega$ the moduli of continuity of the coefficients of highest order in $B_x$ are uniformly bounded. Hence a priori estimates (see \cite{GilbTru}) work for all $B_x$. The corresponding constants depend only on the dimension $n$, $p, \eta_2, \|\Phi_*\|, \|\Phi^*\|, \Lambda$, the domain
 $B$ and the moduli of continuity of the coefficients of the highest order term. So for all $x\in \overline{\Omega}$, there is a common sector $S_{\Theta,\omega}\subset \rho(-B_x)$ and there exists $M\geq 1$ such that
\begin{align*}
 \|(\lambda+ B_x)^{-1}\|_{\L(L_p(\B))} &\leq \frac{M}{|\lambda-\omega|}, && \text{for } \lambda \in S_{\Theta,\omega}, x\in \overline{\Omega}.
\end{align*}
Thus the assumptions of \cite{Dani1}, Lemma 3  are satisfied and hence $\B$ is sectorial. From (\ref{25}) and estimates for $\Phi^*, \Phi_*$,  we conclude the assertion.
\end{proof}
Remark that $A_1(v)$ is a sectorial operator in $L_p(\Omega)$. This follows from standard elliptic theory.
It is shown in \cite{DHP} that for operators on Banach spaces the properties to possess maximal $L_p$-regularity and 
$\RR$-sectoriality with an $\RR$-angle smaller than $\frac\pi2$ are equivalent. Further in this work the authors show maximal regularity for a certain class of elliptic operators on domains. This is used here. They impose several conditions. We will varify these constraints for $A_1(v)$ and $B_x$ ($x\in \Omega$) so we can later shift this to the full problem.
We show that indeed the operators defined above satisfy the smoothness (SC) and ellipticity conditions (EC) in \cite{DHP}.
\begin{Lem}\label{Hilfe1}
 Let $v\in C^1(\overline{\Omega})$. Take  $0<\epsilon<\frac{\pi}{2}$. Then for each $x\in \Omega$ the operator $B_x$ is $\RR$-sectorial of $\RR$-angle $\Phi_\mathcal{A}<\epsilon$. This means that for each $\Phi>\Phi_\mathcal{A}$ there is $\mu_\Phi\geq 0$ such that the parabolic initial-boundary value problem
\begin{align*}
 \partial_t u + \mu_\Phi u + B_xu &=f, &&t>0,\\
u(0)&= u_0
\end{align*}
has a unique solution in $L_p(\R_+, L_p(B)).$
\end{Lem}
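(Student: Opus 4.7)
The goal is to place $B_x$ in the framework of Denk, Hieber and Pr\"uss \cite{DHP}, verify their smoothness condition (SC) and ellipticity condition (EC), together with the Lopatinskii--Shapiro condition at $\partial B$, and then apply their general theorem giving $\RR$-sectoriality with $\RR$-angle below any prescribed value in $(0,\pi/2)$. Since \cite{DHP} also proves the equivalence of $\RR$-sectoriality (with $\RR$-angle $<\pi/2$) and maximal $L_p$-regularity, the assertion about the parabolic IBVP follows at once after shifting by $\mu_\Phi$.

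First I would expand $\mathcal{A}_x(v(x))$ out of divergence form to read off its coefficients on $B$. The principal part has the scalar coefficients
\[ a^{ij}(y)\;=\;b_2(v)(x,y)\, g^{ij}(x)(y),\]
while the order-one coefficients arise from differentiating $b_2(v)(x,\cdot)\sqrt{|g(x)|}\,g^{ij}(x)$ once in $y$. The hypotheses $\tilde a_2\in C^\infty$ and $\tilde b_2\in C^{2+\alpha}(\overline\Omega\times\overline B\times\R)$, together with the smoothness of $\Phi$, imply $a^{ij}\in C^{1+\alpha}(\overline B)$ and the lower-order terms in $C^\alpha(\overline B)$, which is well inside the regularity demanded by (SC). Ellipticity (EC) follows from
\[ \sum_{i,j} a^{ij}(y)\,\xi_i\xi_j \;\geq\;\eta\,\gamma(x)\,|\xi|^2 \qquad (\xi\in\R^n),\]
using $\tilde b_2\geq \eta$ and strict positive definiteness of $g(x)$ on the compact set $\overline B$. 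The Dirichlet boundary condition on the smooth $\partial B$ satisfies the Lopatinskii--Shapiro condition trivially.

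Having checked (SC), (EC) and the boundary condition, the DHP theorem produces a $\mu_\Phi\geq 0$ and an $\RR$-angle $\Phi_\mathcal{A}$ such that $\mu_\Phi + B_x$ is $\RR$-sectorial on $L_p(B)$. By the Denk--Hieber--Pr\"uss characterisation, this is equivalent to maximal $L_p$-regularity of the shifted problem and therefore yields the unique solution in $L_p(\R_+,L_p(B))$ claimed.

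The main delicacy is that $\Phi_\mathcal{A}$ must be chosen below any prescribed $\epsilon<\pi/2$. For the present operator this is natural: the principal symbol is scalar, real-valued and strictly positive, so its pointwise spectrum lies on $\R_{>0}$ and the angle of ellipticity is $0$. The $\RR$-angle inherited from the result in \cite{DHP} is controlled by the angle of ellipticity up to an arbitrarily small loss, so a sufficiently large choice of $\mu_\Phi$ realises the bound $\Phi_\mathcal{A}<\epsilon$ on the corresponding shifted sector. This is the one step where some care is needed to track the constants, since the estimates must only depend on the data already shown to be uniform in $x\in\overline\Omega$ (namely $n,p,\eta,\|\Phi_*\|,\|\Phi^*\|$ and the moduli of continuity of the top-order coefficients), so that the same argument can later be lifted to the $x$-dependent operator $\textbf{A}_2(v)$.
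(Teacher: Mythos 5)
Your proposal follows the same route as the paper: verify the smoothness (SC) and ellipticity (EC) conditions plus Lopatinskii--Shapiro, then invoke the Denk--Hieber--Pr\"uss theorem (Theorem 8.2 in \cite{DHP}) and the equivalence of $\RR$-sectoriality with $\RR$-angle $<\pi/2$ and maximal $L_p$-regularity. The only cosmetic difference is that the paper checks Lopatinskii--Shapiro by an explicit spherical-coordinate calculation deferred to \cite{Doktorarbeit}, while you appeal to the standard fact that Dirichlet conditions for a scalar second-order parameter-elliptic operator always satisfy it; you also make explicit (where the paper is silent) why the real, positive, scalar principal symbol forces the angle of ellipticity to zero so that $\Phi_\mathcal{A}$ can be pushed below any $\epsilon<\pi/2$ -- a useful clarification, not a different argument.
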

\begin{proof} Let $v\in C^1(\overline{\Omega})$. Fix $x\in \Omega$.
By the assumptions $B_x$ fulfills (SC) and the ellipticity condition. So it remains to check whether the condition of Lopatinskii-Shapiro is satisfied.
This can be done using spherical coordinates. For details see \cite{Doktorarbeit}. Then Theorem 8.2 of \cite{DHP} proves the lemma.
\end{proof}
The uniform bounds for the transformation and $b_2$ ensure that $\mu_\Phi$ can be chosen independently of $x$. So we can take the term to the right hand side $f_2(u,U)$. Thus we may assume that for some fixed $\Phi<\frac{\pi}{2}$, it holds $\mu_\Phi=0$.  
\begin{Prop}\label{Hilfe2}
Let $v\in C^1 (\overline{\Omega })$. Then $\A_2(v)$ is $\RR$-sectorial with $\RR$-angle less or equal to $\Phi$.
\end{Prop}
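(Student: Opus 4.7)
The plan is to lift the $\RR$-sectoriality of the family $\{B_x\}_{x\in\overline{\Omega}}$ from Lemma \ref{Hilfe1} to the operator $\B$ on $L_p(\Omega,L_p(B))$, and then transfer the result to $\A_2(v)$ via the identity $\B=\Phi^*\A_2(v)\Phi_*$ from (\ref{25}). First I would check that the $\RR$-sectorial bound in Lemma \ref{Hilfe1} is uniform in $x\in\overline{\Omega}$: the constants from Theorem 8.2 in \cite{DHP} depend only on $p$, $n$, the ellipticity constant $\eta$, the modulus of continuity of the top order coefficients, the operator norms $\|\Phi_*\|,\|\Phi^*\|$ and the geometry of $B$, all of which are uniformly controlled because $b_2(v)\in C^{2+\alpha}(\overline{\Omega}\times B)$ and $\Phi$ is smooth with uniform bounds, exactly as was used in the proof of Lemma \ref{Hilfe0}. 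Hence there exists a sector $S_{\Theta,0}$ with $\Theta>\pi-\Phi$ and a constant $M_{\RR}$ such that $\{\lambda(\lambda+B_x)^{-1}:\lambda\in S_{\Theta,0},\ x\in\overline{\Omega}\}$ is $\RR$-bounded in $\L(L_p(B))$ with $\RR$-bound $\le M_{\RR}$.

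Next I would upgrade this fibered $\RR$-bound to an $\RR$-bound for $\B$ on $L_p(\Omega,L_p(B))$. Since $(\lambda+\B)^{-1}$ acts pointwise as $F\mapsto[x\mapsto(\lambda+B_x)^{-1}F(x)]$, for any finite choice $\lambda_1,\dots,\lambda_N\in S_{\Theta,0}$ and $F_1,\dots,F_N\in L_p(\Omega,L_p(B))$ Fubini's theorem gives
\begin{align*}
\Big\|\sum_{k}\epsilon_k\lambda_k(\lambda_k+\B)^{-1}F_k\Big\|_{L_p(\Omega\times B)}^p
&=\int_\Omega\Big\|\sum_{k}\epsilon_k\lambda_k(\lambda_k+B_x)^{-1}F_k(x)\Big\|_{L_p(B)}^p dx.
\end{align*}
Taking the $L_p$-average over the Rademacher variables $\epsilon_k$ and applying the pointwise $\RR$-bound for $\{B_x\}$, the integrand is dominated by $M_{\RR}^p\,\mathbb{E}\|\sum_k\epsilon_k F_k(x)\|_{L_p(B)}^p$; integrating in $x$ and using Fubini once more produces the required estimate with $\RR$-bound $\le M_{\RR}$ for $\B$. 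Because $L_p(B)$ is UMD for $2\le p<\infty$, this square-function/Fubini argument is legitimate. Thus $\B$ is $\RR$-sectorial on $L_p(\Omega,L_p(B))$ with $\RR$-angle at most $\Phi$.

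Finally, the identity $\A_2(v)=\Phi_*\B\,\Phi^*$ with $\Phi^*,\Phi_*$ bounded isomorphisms independent of $\lambda$ gives $\lambda(\lambda+\A_2(v))^{-1}=\Phi_*\,\lambda(\lambda+\B)^{-1}\Phi^*$, so conjugation preserves $\RR$-boundedness up to the factor $\|\Phi_*\|\|\Phi^*\|$. This yields $\RR$-sectoriality of $\A_2(v)$ with $\RR$-angle $\le\Phi$.

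The principal obstacle is the lifting step: verifying that the uniformity of the $\RR$-bound in $x$ really passes through to the vector-valued space. This rests on (i) the full uniformity of all constants appearing in the application of \cite[Thm.~8.2]{DHP} to the family $B_x$, which relies on the smoothness and uniform ellipticity assumptions on $\tilde{b}_2$ and $\Phi$, and (ii) the UMD property of $L_p(B)$ so that the Rademacher-sum calculation above is meaningful and the multiplier-type transfer from pointwise to fibered $\RR$-boundedness is valid.
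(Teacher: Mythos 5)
Your Fubini reduction to the fibrewise estimate and the final conjugation by $\Phi_*,\Phi^*$ are exactly what the paper does (its parts (i) and (iii)). The gap is in the step you describe as ``the principal obstacle'': you assert that the $\RR$-bound of the family $\{\lambda(\lambda+B_x)^{-1}:\lambda\in S_{\Theta,0},\ x\in\overline\Omega\}$ is uniformly bounded by some $M_{\RR}$ because the constants in Theorem 8.2 of \cite{DHP} ``depend only on'' uniformly controlled quantities. Lemma \ref{Hilfe1} as stated gives only a pointwise statement: for each fixed $x$ there exists a finite $\RR$-bound $C(x)$, with no claim that $x\mapsto C(x)$ is bounded. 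Turning the vague constant-tracking heuristic into a proof would require reopening the localization/perturbation machinery behind Theorem 8.2 and verifying uniformity at each stage; nothing in the paper's citations does this for you, and the paper itself clearly did not consider this route available. So as written the uniformity of $M_{\RR}$ is asserted, not proved.

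The paper bridges exactly this gap in its part (ii). It first records the resolvent-difference bound
$\|\lambda(\lambda+B_x)^{-1}-\lambda(\lambda+B_y)^{-1}\|\le \tilde L\|x-y\|M^2/|\lambda|$,
coming from $b_2(v)\in C^1$ and the uniform a priori estimates, then derives a perturbation inequality for the minimal $\RR$-bounds,
$C(x)\le C(x_0)(1+L|x-x_0|)+L|x-x_0|\,C(x)C(x_0)$,
and finally combines this with compactness of $\overline\Omega$ in a contradiction argument to obtain $\sup_x C(x)<\infty$. This only uses the pointwise finiteness of $C(x)$ from Lemma \ref{Hilfe1}, the Lipschitz dependence of $B_x$ on $x$, and the uniform sectoriality bound from Lemma \ref{Hilfe0}; it does not rely on any quantitative content of Theorem 8.2 in \cite{DHP}. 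You should replace your appeal to constant-tracking by this perturbation-plus-compactness argument (or supply an explicit uniform version of the $\RR$-bound theorem). A minor point: the UMD remark is unnecessary here --- with $q=p$ the Fubini computation is elementary and does not invoke UMD, which is exactly why the paper fixes $q=p$.
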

\begin{proof}
(i) Let $v\in C^1 (\overline{\Omega })$ and $\{B_x, x\in  \Omega\}$ as before. We start to consider $\B$. From the proof of Lemma \ref{Hilfe0} we know that $\B$ is sectorial. So it remains to show that
$$\RR( \{t(t+\B)^{-1}; t>0\} ) < \infty$$
and that the $\RR$-angle of $\B$ is less or equal $\Phi$. We will need the following estimate. Let $\lambda \in S_{\pi-\Phi, 0} \subset\rho(\B)$ for arbitrary $x\in \Omega$. For any $x,y\in \Omega$ it holds
\begin{align*}
 \|\lambda(\lambda+ B_x)^{-1} - \lambda (\lambda+B_y)^{-1}\|_{\L(L_p(B))}&= \| (B_y-B_x) \lambda (\lambda+ B_x)^{-1} (\lambda+B_y)^{-1}\|_{\L(L_p(B))}
\leq \tilde{L}\| y-x\| \frac{M^2}{|\lambda|}.
\end{align*}
Here $\tilde{L}$ and $M$ are independent of $x,y$. The last inequality holds because $b_2(v)\in C^1(\overline{\Omega})$.\\[0.2cm]
Next let $(\Sigma, M, \mu)$ be a probability space and let $N\in \N$. For $j=1,\dots, N$ let $\epsilon_j$ be a random, $\{-1,1\}$-valued 
variable, let $U_j\in L_p(\Omega, L_p(B))=:X$ and let $\lambda_j \in S_{\pi-\Phi,0}$. Take further $q=p$. There exists a constant $C>0$ such that 
\begin{align*}
 \Big\| \sum_j \epsilon_j\lambda_j(\lambda_j+\B)^{-1}U_j\Big\|_{L_p(\Sigma,X)} &\leq \sum_j \|\epsilon_j \lambda_j(\lambda_j+\B)^{-1} U_j\|_{L_p(\Sigma,X)}\\
&\leq C \sum_j M \|U_j\| < \infty. 
\end{align*}
So the intergals exist and we can apply Fubini's Theorem. It holds
\begin{align*} 
 \Big\|\sum_{j=1}^N \epsilon_j \lambda_j(\lambda_j+\B)^{-1} U_j\Big\|^p_{L_p(\Sigma, X)}
= \int_\Omega \left( \int_\Sigma \Big\| \sum_j \epsilon_j(s) \lambda_j(\lambda_j+B_x)^{-1}\hat{U}_j(x)\Big\|_{L_p(B)}^p \, d\mu(s)\right) \, dx
 \end{align*}
where $\hat{U}$ in the integrals stands for a representative of $U\in L_p(\Omega, L_p(B))$.\\
(ii) From the previous Lemma and \cite{DHP} we know that $B_x$ is $\RR$-sectorial with $\RR$-angle smaller that $\Phi$ for any $x\in \overline{\Omega}$. So there is a constant $C(x)>0$ such that
\begin{equation}\label{Ungleich}
  \Big\|\sum_j \epsilon_j \lambda_j(\lambda_j+B_{x})^{-1} \hat{U}_j(x)\Big\|_{L_p(\Sigma, L_p(B))}\leq C(x)\Big\| \sum_j \epsilon_j \hat{U}_j(x)\Big\|_{L_p(\Sigma,L_p(B))},
\end{equation}
where $C(x)$ is the $\RR$-bound of $\{ \lambda (\lambda + B_x)^{-1}, \lambda \in S_{\pi-\Phi, 0}\}$.
Fix $x_0\in \Omega$. 
We get
\begin{align*}
&\Big\|\sum_j \epsilon_j \lambda_j(\lambda_j+B_x)^{-1} \hat{U}_j(x)\Big\|_{L_p(\Sigma, L_p(B))}\\
& \leq  \Big\|\sum_j \epsilon_j \lambda_j(\lambda_j+B_{x_0})^{-1} \hat{U}_j(x)\Big\|_{L_p(\Sigma, L_p(B))}\\
&\qquad+ \Big\|\sum_j \epsilon_j \left(\lambda_j(\lambda_j+B_x)^{-1}-\lambda_j(\lambda_j+B_{x_0})^{-1}\right) \hat{U}_j(x)\Big\|_{L_p(\Sigma, L_p(B))}.
\end{align*}
We estimate the second term with the help of \eqref{Ungleich}
\begin{align*}
&\Big\|\sum_j \epsilon_j \left(\lambda_j(\lambda_j+B_x)^{-1}-\lambda_j(\lambda_j+B_{x_0})^{-1}\right) \hat{U}_j(x)\Big\|_{L_p(\Sigma, L_p(B))}\\
& \leq C(x_0) \Big\| \sum_j \epsilon_j  (B_{x_0}-B_x) (\lambda_j+B_x)^{-1} \hat{U}_j(x)\Big\|_{L_p(\Sigma, L_p(B))}\\
& = C(x_0) \Big\| \sum_j \epsilon_j (B_{x_0}-B_x) B_x^{-1} (-\lambda_j+ B_x+\lambda_j) (\lambda_j+B_x)^{-1} \hat{U}_j(x)\Big\|\\
& \leq C(x_0) \left( \Big\|(B_{x_0}-B_x)B_x^{-1} \sum_j \epsilon_j  \lambda_j (\lambda_j+B_{x_0})^{-1} \lambda_j (\lambda_j+B_x)^{-1} \hat{U}_j(x)\Big\|
\right.\\
&\qquad\left.+ \Big\|(B_{x_0}-B_x)B_x^{-1} \sum_j \epsilon_j \lambda_j(\lambda_j+B_{x_0})^{-1} \hat{U}_j(x)\Big\|_{L_p(\Sigma, L_p(B))}\right)\\
& \leq \|(B_{x_0}-B_x)B_x^{-1}\|_{\L(L_p(B))} (C(x_0)C(x)+C(x_0) ) \Big\| \sum_j \epsilon_j \hat{U}_j(x)\Big\|_{L_p(\Sigma,L_p(B))}.
\end{align*}
Given $u\in L_p(B)$, the uniform a priori estimates for $B_x$ lead to
 $$\|B_x^{-1} u\|_{W_p^2} \leq C_p M_p \|B_xB_x^{-1}u\|_{L_p(\Omega)}= C_pM_p \|u\|_{L_p(\Omega)}. $$
  Further the map $x \mapsto B_x$ is Lipschitz continuous with some constant $\tilde{L}>0$. Since $C(x)$ was choosen minimal we conclude that
\begin{align}
 C(x) \leq C(x_0) (1+ L |x-x_0|) + L |x-x_0| C(x) C(x_0), \label{Rbounds}
\end{align}
for $L:= \tilde{L}C_p M$. The constant $C_p$ occurs if we consider the equivalent norm on $W_p^2(B)$ that only includes highest derivatives. Now we show that the constants $\{C(x),x\in \overline{\Omega}\}$ are unifomly bounded. Suppose instead,
$$\sup_{x\in  \overline{\Omega}} C(x) = \infty.$$
Then there exists a sequence $(x_n)_{n\in\N} \subset \overline{\Omega}$ such that $C(x_n) \rightarrow \infty$. Of course there is a subsequence of $(x_n)$ that converges to some $x\in \overline{\Omega}$. We call it again $(x_n)$.
Then we know that $C(x)< \infty$. Let $\epsilon >0$ and $n\in \N$ such that $|x_n-x| < \epsilon$. By (\ref{Rbounds}) with the identification of $x_0$ with $x$ and $x_n$ instead of $x$ it holds
$$ C(x_n) \leq C(x) (1+L\epsilon) + L \epsilon C(x) C(x_n).$$
The term $C(x)(1+L\epsilon)$ is bounded by some value $S$ if $\epsilon$ is small enough. So
\begin{align*}
C(x_n) &\leq L \epsilon C(x) C(x_n) + S.
\end{align*}
For $\epsilon = \frac{1}{2 L C(x)}>0$ we obtain a bound for $(C(x_n))_{n> N }$ which is a contradiction. \\
(iii) Let $C= \sup_{x\in \overline{\Omega}} C(x)$. Then with the considerations
of part (i) we conclude
\begin{align*}
 \Big\|\sum_{j=1}^N \epsilon_j \lambda_j(\lambda_j+\B)^{-1} U_j\Big\|^p_{L_p(\Sigma, X)}
&\leq C^p\Big\| \sum_j \epsilon_j U_j\Big\|^p_{ L_p(\Sigma, X)}.
\end{align*}
So $\B$ is $\RR$-sectorial with $\RR$-angle less or equal than $\Phi$. By the permanence properties for $\RR$-sectorial operators (\cite{DHP}, part 4.1) 
we conclude that $\A_2(v)$ is $\RR$-sectorial with the same $\RR$-angle. 
\end{proof}
Now Theorem 4.4 in \cite{DHP} 
ensures that $\A_2(v)$ has maximal $L_p$-regularity. For fixed $v\in C^1(\overline{\Omega})$ it is furthermore true that $A_1(v)$ possesses maximal $L_p$-regularity. These results are now transfered to the coupled operator. For $v\in C^1(\Omega)$ we set
\begin{align*}
dom(\A(v))& = \bigcup_{u\in W_p^2(\Omega)\cap W_p^{1,0}(\Omega) } \{u\} \times D_0(u),\\
\A(v) (u,U) &= \left( - div_x( a_1(v) \nabla_x u), \Phi_* [\mathcal{A}_x(v(x)) \Phi^* U] \right), && (u,U)\in dom(\A(v)).
\end{align*}
Note that the domain of definition $dom(\A(v))$ is indipendent of $v$. It is indeed $dom(\A)$. From the considerations before we know that there is a sector $S_{\pi-\Phi,0} \subset \rho(-\A_2(v))\cap\rho(-A_1(v))$. 
\begin{theorem}\label{MaxReg}
For any $v\in C^1(\overline{\Omega})$, the operator $\A(v)$ is $\RR$-sectorial and possesses maximal $L_p$-regularity.
\end{theorem}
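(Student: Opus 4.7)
The plan is to reduce the coupled operator $\A(v)$ to a lower triangular block operator matrix on $Y_0$ and then invoke the permanence properties of $\RR$-sectoriality. This mirrors the argument for the linear case in \cite{Dani1}, which applies here because the $v$-dependence is absorbed into the coefficients $a_1(v), b_2(v) \in C^1(\overline{\Omega})$ without affecting the structural hypotheses (strong ellipticity, smooth coefficients, Dirichlet boundary conditions); all the necessary $\RR$-bounds on the building blocks have been collected in Lemmas \ref{Hilfe0} and \ref{Hilfe1} and in Proposition \ref{Hilfe2}.

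Concretely, fix $v \in C^1(\overline{\Omega})$ and take $\lambda$ in the common sector $S_{\pi-\Phi, 0} \subset \rho(-A_1(v)) \cap \rho(-\A_2(v))$. To invert $\lambda + \A(v)$ on $(f_1, f_2) \in Y_0$, first set $u := (\lambda + A_1(v))^{-1} f_1$. For the second component, use the decomposition $U = Ru + W$ with $R$ the right inverse of the trace, so that $\tr W = 0$; the equation for $U$ then reduces to $\lambda W + \A_2(v) W = f_2 - (\lambda + \Phi_* \mathcal{A}_x(v)\Phi^*) Ru$, which is solved by $W = (\lambda + \A_2(v))^{-1}[f_2 - (\lambda + \Phi_*\mathcal{A}_x(v)\Phi^*)Ru]$. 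This exhibits the resolvent as
\[
(\lambda + \A(v))^{-1} = \begin{pmatrix} (\lambda + A_1(v))^{-1} & 0 \\ P(\lambda)(\lambda + A_1(v))^{-1} & (\lambda + \A_2(v))^{-1} \end{pmatrix},
\]
with $P(\lambda) = R - (\lambda + \A_2(v))^{-1}(\lambda + \Phi_*\mathcal{A}_x(v)\Phi^*)R$. The diagonal blocks already form $\RR$-bounded families on $S_{\pi-\Phi,0}$, by standard elliptic theory for $A_1(v)$ and by Proposition \ref{Hilfe2} for $\A_2(v)$.

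The main obstacle is to show that $\{\lambda P(\lambda)(\lambda + A_1(v))^{-1}: \lambda \in S_{\pi-\Phi, 0}\}$ is $\RR$-bounded from $L_p(\Omega)$ into $L_p(\Omega, L_p(\Omega_x))$. I would handle this by exploiting the continuity of $R$ from $W_p^{2-1/p}$-valued functions into $L_p(\Omega, W_p^2(\Omega_x))$, the uniform a priori bounds on $\Phi_*\mathcal{A}_x(v)\Phi^*$ established in Lemma \ref{Hilfe0}, and the interpolation identity (\ref{Interpol}) to place $Ru$ in the correct intermediate space; the resulting compositions are then $\RR$-bounded because of the $\RR$-sectoriality of $\A_2(v)$ together with the permanence properties in \cite[Sec.\ 4.1]{DHP}. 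Once this is in place, $\A(v)$ is $\RR$-sectorial with $\RR$-angle at most $\Phi < \pi/2$, and Theorem 4.4 of \cite{DHP} yields maximal $L_p$-regularity.
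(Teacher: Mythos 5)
Your proposal follows essentially the same route as the paper: reduce $\lambda+\A(v)$ to a lower--triangular block form with diagonal entries $\lambda+A_1(v)$ and $\lambda+\A_2(v)$ (equivalently $\lambda+\B$ after conjugation by $\Phi_*,\Phi^*$), observe that the diagonal blocks are already $\RR$-sectorial by Proposition~\ref{Hilfe2} and standard elliptic theory, and then control the off-diagonal coupling term coming from $Ru$. The paper does not write the resolvent as an explicit $2\times 2$ matrix but instead exhibits the same decomposition through the formula $u=(\lambda+A_1(v))^{-1}f$, $U=\Phi_*(\lambda+\B)^{-1}\Phi^*(g-\lambda Ru)+Ru$, and then establishes the $\RR$-bound for $\{\lambda(\lambda+\A(v))^{-1}\}$ by a direct Rademacher/Fubini computation in the style of Proposition~\ref{Hilfe2}; your plan to obtain the off-diagonal $\RR$-bound by composing $\RR$-bounded families (namely $\{\lambda(\lambda+\A_2(v))^{-1}\}$, the fixed operator $\Phi_*\mathcal{A}_x(v)\Phi^*R$ into $L_p(\Omega,L_p(\Omega_x))$, and $\{(\lambda+A_1(v))^{-1}\}$ viewed as $\RR$-bounded from $L_p(\Omega)$ into $W_p^2(\Omega)$) and invoking permanence properties from \cite{DHP} is an equally valid but slightly more modular way of reaching the same estimate. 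One small point in your favor: your ansatz $W=(\lambda+\A_2(v))^{-1}\bigl[f_2-(\lambda+\Phi_*\mathcal{A}_x(v)\Phi^*)Ru\bigr]$ keeps the term $\Phi_*\mathcal{A}_x(v)\Phi^*Ru$, which in general does not vanish since $\mathcal{A}_x(v)$ now depends on $v$ while $R$ is a fixed right inverse of the trace; the paper's displayed formula drops this term (as in the semilinear setting of \cite{Dani1}, where $R$ can be taken $\mathcal{A}_x$-harmonic), so your version of the resolvent identity is the more careful one. In short: same strategy, a slightly more explicit and arguably cleaner algebraic presentation on your side, with the $\RR$-bound obtained via permanence rather than a hands-on Rademacher calculation.
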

\begin{proof}
Let $v\in C^1(\overline{\Omega})$ and  $\B= \Phi^* \A_2(v) \Phi_*$. 
Take $\lambda\in S_{\pi-\Phi,0} \subset \rho(-\B)\cap \rho(- A_1(v))$ and let 
$$(f,g)\in Y_0= L_p(\Omega) \times L_p(\Omega, L_p(\Omega_x))$$ 
be given. We set
\begin{align*}
 u &= (\lambda + A_1(v))^{-1} f,\\
 U &= \Phi_* (\lambda + \B)^{-1} \Phi^* ( g- \lambda Ru) + Ru,
\end{align*}
Then $(u,U)\in dom(\A(v))$ and it holds 
$$ (f,g) = (\lambda + \A(v))(u, U).$$
Then as in \cite{Dani1} we find a constant $M>0$ such that
\begin{align*}
 &|\lambda | \| (\lambda + \A(v))^{-1}\|_{\L(L_p(Y_0))}\\ &\leq |\lambda| \| (\lambda + A_1(v))^{-1}\| + |\lambda| \|\Phi_* (\lambda + \B)^{-1}\Phi^*\| (1+ |\lambda | \| R\| \| (\lambda + A_1(v))^{-1}\|)\\
&\qquad + (1+|\lambda| \|R\| \|(\lambda + A_1(v))^{-1}\|) |\lambda | \|\Phi_* (\lambda+\B)^{-1} \Phi^*\|\leq M. 
\end{align*}
Hence $\A(v)$ is sectorial. Take $(\Sigma, M, \mu)$, $N\in\N$ and 
$\epsilon_j, \lambda_j$ as in Proposition \ref{Hilfe2}. For $j=1,\dots, n$ let $u_j\in L_p(\Omega)$, $U_j\in L_p(\Omega,L_p(\Omega_x))=X$. We use again Fubini's Theorem and the methods of the proof for $\B$. Let $C_1, C_2$ be the bounds from the $\RR$-calculus for $A_1(v)$ and $\B$. We get
\begin{align*}
& \Big\| \sum_{j=1}^N \epsilon_j \lambda_j (\lambda_j + \A(v))^{-1} (u_j, U_j)\Big\|_{L_p(\Sigma,Y_0)}\\
&\leq C_1(1+\|R\|) \Big\|\sum_j \epsilon_j u_j\Big\|_{L_p(\Sigma,L_p(\Omega))} + C_2 \|\Phi_*\| \Big\| \sum_j \epsilon_j \Phi^* U_j\Big\|_{L_p(\Sigma,X)}\\
&\qquad + \left(C_2+1\right)C_1 \|\Phi_*\| \|\Phi^*\| \|\B^{-1}\|\|R\| \Big\|\sum_j \epsilon_j u_j\Big\|_{L_p(\Sigma,L_p(\Omega))}\\
&\leq C\Big\| \sum_j \epsilon_j(u_j,U_j)\Big\|_{L_p(\Sigma,Y_0)}.
\end{align*}
Now the assertion follows from \cite{DHP}.
\end{proof}
We will write $\A(v) \in MR(p,Y_0)$ meaning that $\A(v)$ possesses maximal $L_p$-regularity with respect to $Y_0 = L_p(\Omega)\times L_p(\Omega, L_p(\Omega_x))$.
Take $n+2<p<\infty$. Then due to Sobolev embeddings it holds
$$W_p^{2-\frac2p}(\Omega) \hookrightarrow C^1(\overline{\Omega}).$$
The definition of $\A(v)$ and the characterization of the interpolation space $Y_{1-\frac1p,p}$, cf. \eqref{Interpol}, ensure that
$$\big(v\mapsto \A(v)\big) \in C^{1-}\left( Y_{1-\frac1p,p}, \L(D(\A),Y_0)\right).$$
Local existence of the solution for the quasilinear initial-boundary value problem now follows from a general result due to Cl\'ement and Li.
\begin{Cor}\label{main}
 Let $p>n+2$, let $\A(\cdot)$ be defined as above, $T_0>0$ and 
\begin{align*}
 f &\in C^{1-,1-} \left([0,T_0]\times Y_{1-\frac1p,p}, Y_0\right), & g &\in L_p([0,T_0], Y_0).
\end{align*}
Let $(u_0,U_0)\in Y_{1-\frac1p,p}$. Then there exists $T_1\in (0,T_0]$ and unique functions $$(u,U)\in L_p((0,T_1), D(\A))\cap W_p^1((0,T_1), Y_0)\cap C([0,T_1], Y_{1-\frac1p,p})$$
that satisfy
\begin{align*}
\begin{cases}
 \left(\dot{u}, \dot{U}\right) + \A(u(t))(u(t),U(t)) = f\big(t, u(t), U(t)\big) + g(t), \qquad \text{on }(0,T_1),\\
\qquad \qquad \qquad \quad \;(u(0), U(0))=(u_0,U_0).
\end{cases}
\end{align*}
\end{Cor}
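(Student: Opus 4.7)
The plan is to verify the hypotheses of the abstract quasilinear existence theorem of Cl\'ement and Li \cite{ClLi} and invoke it directly; almost all of the substantive work has already been done in Theorem \ref{MaxReg} and in the $C^{1-}$ assertion immediately preceding this corollary, so the proof should amount to bookkeeping. The Cl\'ement--Li result requires: (a) for every $v$ in an open neighbourhood of $u_0$ in the trace space $Y_{1-\frac1p,p}$ one has $\A(v)\in MR(p,Y_0)$; (b) the map $v\mapsto\A(v)$ is locally of class $C^{1-}$ from $Y_{1-\frac1p,p}$ into $\L(Y_1,Y_0)$; and (c) the nonlinearity is jointly $C^{1-,1-}$ in $(t,u,U)$ with values in $Y_0$, plus an $L_p$-forcing. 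Under these hypotheses the theorem produces a unique maximal-regularity solution on some $(0,T_1]\subset (0,T_0]$.

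First I would record that since $p>n+2$ the Sobolev embedding $W_p^{2-\frac{2}{p}}(\Omega)\hookrightarrow C^1(\overline{\Omega})$ applies; combined with the characterisation \eqref{Interpol}, this guarantees that the first component of any $(v,V)\in Y_{1-\frac1p,p}$ lies in $C^1(\overline{\Omega})$, so $\A(v)$ as defined in Section~2 makes sense on this neighbourhood. Condition (a) then follows from Theorem \ref{MaxReg} together with the equivalence between maximal $L_p$-regularity and $\RR$-sectoriality of $\RR$-angle less than $\pi/2$ established in \cite{DHP}. Condition (b) is precisely the $C^{1-}$ dependence stated just before the corollary, coming from the smoothness of $\tilde{a}_1,\tilde{a}_2$ together with the embedding above. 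Condition (c) is part of the hypotheses. The compatibility $\tr U_0=u_0$ and the Dirichlet condition $u_0|_\Gamma=0$ built into $Y_{1-\frac1p,p}$ are inherited by the solution since they are encoded in $Y_1=\mathrm{dom}(\A)$.

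One small technical point is that \cite{ClLi} allows $\A$ to depend on the full state $(u,U)$ while here it depends only on the first component $v=u$; this is handled by pre-composing $v\mapsto\A(v)$ with the bounded linear projection $Y_{1-\frac1p,p}\to W_p^{2-\frac{2}{p}}(\Omega)\cap\ker\tr_\Gamma$, which preserves the $C^{1-}$ regularity. The remaining and in my view only genuine obstacle, though still mild, is to check that the $\RR$-sectorial bounds of $\A(v)$ are uniform as $v$ ranges over a small $Y_{1-\frac1p,p}$-ball around $u_0$; this uniformity is already implicit in Proposition \ref{Hilfe2} and Theorem \ref{MaxReg}, where the estimates depend only on $\eta$, on $\|\Phi_*\|,\|\Phi^*\|$ and on the moduli of continuity of the top-order coefficients, all of which are controlled by the $C^1$-norm of $v$. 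With this uniformity in hand, the standard contraction argument underlying Cl\'ement--Li closes and yields the asserted $(u,U)\in L_p((0,T_1),D(\A))\cap W_p^1((0,T_1),Y_0)\cap C([0,T_1],Y_{1-\frac1p,p})$.
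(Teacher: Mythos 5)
Your proposal is correct and follows essentially the same route as the paper: verify the hypotheses of Theorem 2.4 in Cl\'ement--Li and invoke it, with the key inputs being Theorem \ref{MaxReg} for maximal regularity, the Sobolev embedding $W_p^{2-\frac2p}(\Omega)\hookrightarrow C^1(\overline\Omega)$ for $p>n+2$, and the $C^{1-}$ dependence $v\mapsto\A(v)$ noted just before the corollary. One small remark: the Cl\'ement--Li theorem only needs $\A(u_0)\in MR(p,Y_0)$ at the single initial point (maximal regularity in a neighbourhood is then recovered internally by perturbation from the Lipschitz dependence), so your hypothesis (a) and the subsequent uniformity discussion, while true here, are slightly stronger than what the cited theorem actually requires; the paper accordingly states only $\A(u_0)\in MR(p,Y_0)$ and stops.
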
%\newpage$$\A(u_0)\in MR(p,Y)$$
\begin{proof}
 By Theorem \ref{MaxReg} it holds
$$ \A(u_0)\in MR(p,Y_0).$$ So the assumptions of Theorem 2.4 in \cite{ClLi} are fulfilled and the assertion follows.
\end{proof}
\section{An ansatz for the dependence on $U$}\label{reaction}
In part 2 the operator $\A$ was allowed to depend on the macroscopic density $u$. But usually the homogenisation process also produces a dependence on the micro density $U$. 
We try to circumvent this problem and give a possible way to include this dependence as well. Let $u, U$ denote the concentration of a contaminant in the macro and micro scale of a double porosity system. With $m, M$ we denote the two porosities. If the diffusivity in both scales depends on the concentration $u$ then this can be modeled by using the Nemytskii operators $a_1,b_2$ introduced in the last chapter. 
In general the solute can interact with the  solid structure. Assume that it gets attached to the walls with a certain rate depending on $u$. All different effects like reaction, van-der-Waals forces, electric forces etc. which bind the solute, are 
merged in one sorption term. The concentration of the sorped contaminant is then given by a function $u^*$. 
We suppose that $u^*$ depends linearly on the concentration $u$ in the fluid. Thus
$$\partial_tu^* = K \partial_t u.$$
We neglect any dependence of the sorption on the changing porosity. The interaction results in a prefactor in our equations. So for $t\in (0,T]$ we get
\begin{align*}
       (1+K)\partial_t (mu) - div_x(m a_1(u) \nabla_x u) =& f_1(x,t,u, U), & \text{on } \Omega,\\
	(1+K)\partial_t (MU) - div_z(M a_2(u) \nabla_z U) =& f_2(x,t,u, U), & \text{on } Q.
\end{align*}
Now we assume that the reaction has an effect on the solid structure of the porous blocks i.e. it changes $M$. Since the fissures are of another, larger length scale we do not consider a change of $m$ and set it to one. We define an average and possibly evolving porosity of each block. Let $M:\Omega\times[0,T] \to (0,\infty]$. Moreover we assume that $M$ has values in a bounded interval not including zero. This excludes the case of total pore 
closure in the cells and also that large parts of the solid structure dissolve in the fluid.  So $M(x)$ is bounded between some minimal $M_{min}>0$ and a maximal value $M_{min}<M_{max}<\infty$.  
Let $U\in L_p(\Omega, L_p(\Omega_x)), V= \Phi_* U$. The total amount of reactant in the cell is approximated by 
$$|\Phi_x(B)| \int_{B} \hat{V}(x,y) \, dy.$$
We suppose that there is a function $G_0$, 
$$G_0: [0,\infty) \times [M_{min}, M_{max}] \to [0, C_G],$$
which describes the change of the porosity due to the reaction. It depends on the amount of contaminant in the cell and the porosity. We assume that $G_0$ is Lipschitz continuous in both variables. 

We assume that $G_0$ has to have the same form in all cells. Thus the same Lipschitz constant and bounds are valid for arbitrary $x\in \Omega$. We define 
\begin{align*}
 G: L_p(\Omega,L_p(\Omega_x))\times L_p(\Omega) &\to L_p(\Omega),\\
(U,M) &\mapsto \left(x\mapsto G_0\left(|\Phi_x(B)| \int_{B} \hat{V}(x,y) \, dy, M(x)\right)\right).
\end{align*}
Since $G_0$ is continuous and the arguments of $G$ are measurable on $\Omega$
this is a measurable function on $\Omega$.
In addition we assume that there exists $c_g >0$ such that
\begin{align}
 \sup_{U,M}\, |G(U,M)|< c_g.\label{bound}
\end{align}
The supremum is taken over all admissible $U$ and $M$.  
Finally let $M_0 \in L_p(\Omega)\cap L_\infty(\Omega)$ be admissible. We suppose that for fixed $U\in L_p(\Omega, W_p^2(\Omega_x))$ the evolution equation
\begin{align}
 \left\{\begin{array}{rl}
  \partial_t M(t) &= -G(U,M), \\
  M(0)&=M_0
 \end{array}\right.\label{porous}
\end{align}
has an unique solution in $L_p((0,T_0), L_p(\Omega))\cap W_p^1((0,T_0), L_p(\Omega))$. 
Now we transform the resulting system of equations and write all terms which include $M$ or its derivative on the right hand side. Thus for some $0<T_1<T_0$, the functions $(u,U)$ shall satisfy
\begin{align}\label{P_var}
\left\{\begin{array}{rl}
  \left(\dot{u}(t), \dot{U}(t))\right) + \A(u(t))(u(t),U(t)) &= \tilde{f}\big(t, u(t), U(t), M(t)\big), \, \text{on }(0,T_1),\\
(u(0),U(0))&=(u_0,U_0),\\
\dot{M}(t) &= -G(U,M),\qquad\qquad \quad \text{on }(0,T_1),\\
M(0) &= M_0.
\end{array}\right.
\end{align}
Here $\tilde{f}$ is defined by
\begin{align*}
 \tilde{f}(t,u,U, M) &= \left( f_1(t,u,U), \frac1M f_2(t,u,U) - \partial_t M \frac{U}{M}\right).
\end{align*}
Let us first summerize our notation. We use
\begin{align*}
 Y_0 &= L_p(\Omega)\times L_p(\Omega,L_p(\Omega_x)), & Y_1&= dom(\A),\\
Y_{1-\frac1p} &= (Y_0,Y_1)_{1-\frac1p,p}. &E_1 &= L_p(\Omega), \qquad E_2 = L_p(\Omega,L_p(\Omega_x)).  
\end{align*}
Further we write
\begin{align*}
 X^T &= L_p(0,T; Y_0),&& E_1^T = L_p(0,T; E_1),\\
 Y^T &= W_p^1(0,T;Y_0)\cap L_p(0,T; Y_1),&&E_2^T = L_p(0,T; E_2),\\
 Z^T &= \{\vec{u}\in Y^T; \vec{u}(0)=0\}.
\end{align*}
With $\vec{u}$ we denote  a pair of functions $(u,U)$ from $Y_0$ or $Y^T$. 
Now we can formulate the well posedness result.
\begin{theorem}\label{Ende}
 Suppose $f=\left(f_1,f_2\right)\in C^{1-,1-}([0,T_0)\times Y_{1-\frac1p,p},Y_0)$ for some $T_0>0$ and $G_0, G$ are given functions
as above. For all $(v,V) \in Y_{1-\frac1p,p}$ let $\A(v)$ be defined as before. Let $(u_0, U_0)\in Y_{1-\frac1p,p}, M_0 \in C(\Omega)$, $M_{min}<M_0<M_{max}$. Assume that
\begin{align*}
 \|f_2(t, \vec{u}_0)\|_{E_2} &\leq \frac{M_{min}^2}{4},&&\text{for all } t\in [0,T_0],\\
 \|U_0\|_{E_2} &\leq \frac{M_{min}^2}{4c_g}. 
\end{align*}
 Then there  exists  $T_1(\vec{u}_0)\in (0,T_0]$ and unique functions $$(u,U) \in L_p(0,T_1; Y_1) \cap W_p^1(0,T_1;Y_0)\cap C([0,T_1], Y_{1-\frac1p,p})$$ and $M\in W_p^1(0,T_1; E_0)$ that satisfy (\ref{P_var}).
\end{theorem}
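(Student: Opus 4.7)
The plan is to set up a Banach fixed-point argument on a small time slice $[0,T_1]$, decoupling the pure ODE for $M$ from the quasilinear PDE for $(u,U)$ and iterating over the map $\bar U\mapsto U$. Concretely, for a given candidate $\bar U$ in a closed ball $K\subset L_p(0,T_1;E_2)$ centred at the trivial extension of $U_0$, I would first solve the ODE \eqref{porous} driven by $\bar U$, obtaining $M=M[\bar U]$; then freeze this $M$ and apply Corollary~\ref{main} to the quasilinear problem for $(u,U)$ with the effective right-hand side obtained by substituting $\partial_t M=-G(U,M)$ into $\tilde f$; finally, the output $U$ defines the self-map $\Psi(\bar U)=U$ whose fixed point yields the solution triple $(u,U,M)$.

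For the ODE step, the Lipschitz continuity and the uniform bound $|G|<c_g$ together with the a priori hypothesis on \eqref{porous} give a unique $M\in W_p^1(0,T_0;E_1)$ for any $\bar U\in L_p(0,T_0;E_2)$, with the pointwise bound $|M(x,t)-M_0(x)|\le c_g t$. Since $M_0$ is continuous with $M_0>M_{min}$ uniformly on $\Omega$, there is some $T^{*}>0$ such that $M(x,t)\ge M_{min}/2$ throughout $\Omega\times[0,T^{*}]$. A Gronwall estimate applied to the difference of two ODE solutions then gives Lipschitz dependence $\|M[\bar U_1]-M[\bar U_2]\|_{L_\infty(0,T_1;E_1)}\le C\,T_1\|\bar U_1-\bar U_2\|_{L_p(0,T_1;E_2)}$ whenever $T_1\le T^{*}$.

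For the PDE step, after substituting $\partial_t M=-G(U,M)$ the effective right-hand side is
\begin{align*}
\tilde f_M(t,u,U)=\Bigl(f_1(t,u,U),\;\tfrac{1}{M(t)}f_2(t,u,U)+\tfrac{G(U,M(t))}{M(t)}\,U\Bigr),
\end{align*}
and on the admissible range $M\ge M_{min}/2$ the map $M\mapsto 1/M$ is smooth and bounded, so that $\tilde f_M\in C^{1-,1-}([0,T^{*}]\times Y_{1-\frac1p,p},Y_0)$ inherits the required regularity from $f$ and $G_0$. Corollary~\ref{main} then produces, for the prescribed initial data $(u_0,U_0)$, a unique $(u,U)\in Y^{T_1}$ on some $T_1\in(0,T^{*}]$. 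The smallness hypotheses $\|f_2(t,\vec u_0)\|_{E_2}\le M_{min}^{2}/4$ and $\|U_0\|_{E_2}\le M_{min}^{2}/(4c_g)$ are calibrated exactly so that $\|\tilde f_M(t,\vec u_0)\|_{E_2}\le M_{min}$; combined with the maximal-regularity estimate from Theorem~\ref{MaxReg} this makes $\Psi$ a self-map of $K$ for $T_1$ small. Contraction follows by composing three Lipschitz estimates: (i) $\bar U\mapsto M$ from Step~1; (ii) $M\mapsto 1/M$ on the admissible range; (iii) the Lipschitz dependence of the quasilinear solution on its right-hand side that underlies Cl\'ement--Li \cite{ClLi} and hence Corollary~\ref{main}. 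Each factor carries a positive power of $T_1$, so $T_1$ small yields a strict contraction whose unique fixed point is the desired triple $(u,U,M)$.

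The principal obstacle is the singular factor $1/M$: any escape of $M$ towards zero would destroy both the boundedness of $\tilde f_M$ and its Lipschitz regularity, so the self-map property and the contraction collapse simultaneously. Maintaining $M\ge M_{min}/2$ uniformly on $\Omega\times[0,T_1]$ therefore drives the choice of $T_1$ and of the radius of $K$, and tracking the joint dependence of $T_1$ on the self-map margin, the contraction constant, and the existence time furnished by Corollary~\ref{main} is the delicate bookkeeping step. The numerical slack encoded by the two initial bounds is precisely what reconciles these competing constraints.
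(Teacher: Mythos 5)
Your scheme is a genuinely different route from the paper's. The paper runs a \emph{single} Banach fixed-point iteration on the pair $(\vec u, M)$ in $\Sigma_{\rho,T}\subset Y^T\times E_1^T$, centred at a reference pair $(\vec w,\tilde M)$, with the linearized operator $\A(u_0)$ frozen on the left and the nonlinear mismatch pushed to the right; both the ODE for $M$ and the quasilinear PDE for $(u,U)$ are handled in one pass, in the spirit of Cl\'ement--Li. You instead propose a \emph{nested} iteration $\bar U\mapsto M[\bar U]\mapsto (u,U)$: solve the ODE, then invoke Corollary~\ref{main} as a black box, and contract over $\bar U$ only. The paper's flattened scheme is economical precisely because it needs only maximal regularity for $\A(u_0)$ and the Lipschitz bounds \eqref{CL2-9}--\eqref{CL2-10}; your modular scheme buys cleaner bookkeeping in exchange for two extra facts you do not supply.

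The first missing ingredient is the Lipschitz stability of the Cl\'ement--Li solution under perturbation of the forcing term, which you list as item (iii) and claim ``underlies'' \cite{ClLi}. Neither Corollary~\ref{main} nor Theorem~2.4 of \cite{ClLi} states such an estimate; Corollary~\ref{main} is an existence-and-uniqueness result for a \emph{fixed} right-hand side, not a statement that the solution map is Lipschitz in $g$. Establishing it would require subtracting two quasilinear problems, freezing $\A(u_0)$, and reproducing the very estimates \eqref{CL2-9}--\eqref{CL2-10}, \eqref{H1}--\eqref{H2} that the paper uses inside its own contraction. At that point you have done essentially the paper's computation anyway, just with an extra layer of iteration wrapped around it. Stating the stability lemma and proving it (or citing a source that contains it) is not optional here --- without it your $\Psi$ has no contraction estimate at all.

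The second gap is the uniformity of the inner existence interval. Corollary~\ref{main} produces $T_1=T_1(\tilde f_{M},\vec u_0)$, and $M=M[\bar U]$ varies as $\bar U$ ranges over $K$. Unless you establish a uniform lower bound on the existence time over all admissible $M$ --- which in turn requires uniform Lipschitz constants for $\tilde f_M$ over $M\in M[K]$ and hence a careful a priori confinement of $M$ to a compact interval away from zero, uniformly in $\bar U$ and $t$ --- the self-map $\Psi:K\to K$ is not even defined on a common time interval. You gesture at this (``$M\ge M_{min}/2$ on $[0,T^*]$''), but for the confinement argument you would need the pointwise-in-$x$ bound $|M(x,t)-M_0(x)|\le c_g t$ to hold uniformly, which uses the pointwise structure of $G_0$ rather than the $E_1$-norm estimate you actually state; and the Lipschitz dependence $\|M[\bar U_1]-M[\bar U_2]\|_{L_\infty(0,T_1;E_1)}\lesssim T_1^{1/p'}\|\bar U_1-\bar U_2\|_{L_p}$ gives an $L_p(\Omega)$-in-space control on $M_1-M_2$, whereas the terms $\tfrac1{M}f_2$ and $\tfrac{G}{M}U$ in $\tilde f_M$ call for an $L_\infty(\Omega)$ control of $\tfrac1{M_1}-\tfrac1{M_2}$ to close the estimate in $E_2$. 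The paper sidesteps this mismatch because its contraction works directly with the difference quotient $\tfrac1{M_1M_2}$ bounded pointwise by $\tfrac{1}{M_{min}^2}$, and measures $M_1-M_2$ only in the weak $E_1^T$-norm that appears linearly.

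In short: the architecture is defensible, the numerology $\|f_2(t,\vec u_0)\|_{E_2}\le M_{min}^2/4$, $\|U_0\|_{E_2}\le M_{min}^2/(4c_g)$ is used correctly, and the $T_1^{1/p'}$ gain from Gronwall is the right mechanism. But as written the proposal is not a proof: the stability lemma for the Cl\'ement--Li solution map and the uniformity of the inner existence interval are exactly the content, and they are deferred rather than supplied.
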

\begin{proof} In this proof we write $v'$ instead of $\dot{v}$ to denote the time derivative. Let $\vec{w},\tilde{M}$ be the solutions of the following linear problems. Let $\tilde{M}\in W_p^1(0,T_0; E_1)$ be the solution of 
\begin{align*}
 \tilde{M}'(t) &= -G(U_0,\tilde{M}),&&t\in (0,T_0),\\
 M(0)&= M_0.
\end{align*}
The solution exists due to our assumptions (\ref{porous}).
Let $\vec{w}$ satisfy
\begin{align*}
 \vec{w}'(t) + \A(u_0) \vec{w}(t) &= \tilde{f}(t,\vec{u}_0, \tilde{M}), && t\in (0,T_0),\\
\vec{w}(0)&= \vec{u}_0.
\end{align*}
Because of the properties of $\A(u_0)$ it is well defined.
For the rest of the proof we write $f$ instead of $\tilde{f}$. 
Given $0< T<T_0$, $\rho>0$, let
\begin{align*}
 \Sigma_{\rho,T} = \Big\{(\vec{u},M)\in Y^T\times E_1^T; \vec{u}(0)=\vec{u}_0,\, M(0)=M_0,
\|\vec{u}-\vec{w}\|_{Y^T} \leq \rho, \, \| M-\tilde{M}\|_{E_1^T} \leq &\rho\Big\}.
\end{align*}
We follow the steps of the proof of Theorem 2.1 in \cite{ClLi}. There exists $T_2 >0$ small enough, such that
$$\left\{ \vec{u}(t); t\in (0,T], \vec{u}\in \Sigma_{\rho,T}\right\} \subset Y_{1-\frac1p}$$
if $T\in (0, T_2]$. So for those $\vec{u}$ maximal regularity of $\A(u(t))$ is preserved in this time interval. We deduce that there is a constant $L>0$, such that for $\vec{u}_1,\vec{u}_2\in \Sigma_{\rho,T}$, $t\in (0,T)$, it holds
\begin{align}
 \|\A(u_1(t))-\A(u_2(t))\|_{\mathcal{L}(Y_1,Y_0)} \leq L \|\vec{u}_1(t)-\vec{u}_2(t)\|_{Y_{1-\frac1p}}\label{CL2-9} \\
\|f_i(t,\vec{u}_1(t))-f_i(t,\vec{u}_2(t))\|_{E_i} \leq L\|\vec{u}_1(t)-\vec{u}_2(t)\|_{Y_{1-\frac1p}},&& i=1,2.\label{CL2-10}
\end{align}
We define the mapping 
$$\gamma : Y^T\times E_1^T\to Y^T \times E_1^T:\gamma(\vec{u},M) = (\vec{v},N),$$ 
where $(\vec{v},N)$ is the unique solution of the linear problem 
\begin{align*}
 \vec{v}'(t) + \A(u_0) \vec{v}(t) &= \A(u_0) \vec{u}(t) - \A(u(t)) \vec{u}(t) + f(t,\vec{u}(t), M(t)), && t\in (0,T),\\
\vec{v}(0) &= \vec{u}_0,\\
N'(t) &= -G(U(t),M(t)),&& t\in (0,T),\\
N(0)&= M_0.
\end{align*}
To apply Banach's fixed point theorem we show that $\gamma$ is a contraction on some $\Sigma_{\rho_1,T_1}$. So we estimate $\|\vec{v}-\vec{w}\|_{Z^T}$, $\|M-\tilde{M}\|_{E_1^T}$ and $\|\gamma(\vec{u}_1,M_1)-\gamma(\vec{u}_2,M_2)\|$. Let $(\vec{u},M)\in \Sigma_{\rho,T}$ and $(\vec{v},N)= \gamma(\vec{u},M)$. Then it holds for $t\in (0,T)$,
\begin{align*}
 (\vec{v}-\vec{w})'(t) + \A(u_0)(\vec{v}-\vec{w})(t) &= \A(u_0)\vec{u}(t) - \A(u(t)) \vec{u}(t)+ f(t,\vec{u}(t),M(t))-f(t,\vec{u}_0,\tilde{M}(t)),\\
(\vec{v}-\vec{w})(0)&= 0.
\end{align*}
Now the maximal regularity of $\A(u_0)$ allows us to apply Corollary 2.3 from \cite{ClLi}. Thus there is a constant $\M>0$, such that
\begin{align}
 \left\| \left( \frac{d}{dt} + \A(u_0)\right)^{-1}\right\|_{\L(X^T,Z^T)} &\leq \M,\label{H1}\\
\|\vec{v}\|_{C([0,T], Y_{1-\frac1p})} &\leq \M \|\vec{v}\|_{Z^T} &&\text{for } \vec{v}\in Z^{T}.\label{H2} 
\end{align}
So with (\ref{CL2-9}) we get
\begin{align*}
 \|\vec{v}-\vec{w}\|_{Z^T} 
&\leq \M L (\M\rho +\|\vec{w}-\vec{u}_0\|_{C([0,T],Y_{1-\frac1p}})(\rho+\|\vec{w}\|_{Y^T})+\M \|f(t,\vec{u}, M)-f(t,\vec{u}_0,\tilde{M})\|_{X^T}.
\end{align*}
With (\ref{CL2-10}) and the assumptions on $G$ we estimate the last term
\begin{align*}
 \|f(t,\vec{u}, M)-f(t,\vec{u}_0,\tilde{M})\|_{X^T}&\leq \|f_1(t,\vec{u})-f_1(t,\vec{u}_0)\|_{E_1^T}+ \left\| \frac{1}{M} f_2(t,\vec{u})-\frac{1}{\tilde{M}} f_2(t,\vec{u}_0)\right\|_{E_2^T}\\
& \qquad   + \left\|\frac{U}{M}G(U,M) -\frac{U_0}{\tilde{M}}G(U_0,\tilde{M}) \right\|_{E_2^T}.
\end{align*}
It holds
\begin{align*}
  \|f_1(t,\vec{u})-f_1(t,\vec{u}_0)\|_{E_1^T} &\leq L\|\vec{u}-\vec{u}_0\|_{L_p((0,T), Y_{1-\frac1p})}\leq LT^{1/p} (\M\rho + \|\vec{w}-\vec{u}_0\|_{C([0,T],Y_{1-\frac1p}}).
\end{align*}
The second part is calculated in a similar way. We find
\begin{align*}
 & \left\| \frac{1}{M} f_2(t,\vec{u})-\frac{1}{\tilde{M}} f_2(t,\vec{u}_0)\right\|_{E_2^T} + \left\|\frac{U}{M}G(U,M) -\frac{U_0}{\tilde{M}}G(U_0,\tilde{M}) \right\|_{E_2^T}\\
&\leq \frac{1}{M_{min}^2} \Big[ \| \tilde{M} \|_{E_1^T} \|f_2(t,\vec{u})-f_2(t,\vec{u}_0)\|_{C([0,T], E_2)}  + \|\tilde{M}-M\|_{E_1^T}\|f_2(t,\vec{u}_0)\|_{C([0,T],E_2)}\\
&\quad+  \|\tilde{M}\|_{E_1^T} 2c_g \|\vec{u}-\vec{u}_0\|_{C([0,T], Y_{1-\frac1p})}+ \|\tilde{M}-M\|_{E_1^T} \| U_0 G(U_0, \tilde{M})\|_{C([0,T],E_2)}  \Big]. 
\end{align*}
Observe that $\|f_2(t,\vec{u}_0)\|\leq \frac{M_{min}^2}{4}$ and $\| U_0 G(\vec{u}_0, \tilde{M}(t))\|\leq \frac{M^2_{min}}{4}$. The values $\|\tilde{M}\|_{E_1^T}$ and $\Psi(T) :=\|\vec{w}-\vec{u}_0\|_{C([0,T], Y_{1-\frac1p})}$  vanish if $T\to 0$. Further
$$ \|f_2(t,\vec{u})-f_2(t,\vec{u}_0)\|_{C([0,T], E_2)} \leq L\|\vec{u}-\vec{u}_0\|_{C([0,T], Y_{1-\frac1p})}\leq L(\M\rho + \Psi(T)).$$
We summarize using $\|\tilde{M}-M\|_{E_1^T} \leq \rho$, 
\begin{align}\label{Bed1}
 \|\vec{v}-\vec{w}\|_{Z^T} &\leq \M L (\M\rho + \Psi(T))(\rho +\|\vec{w}\|_{Y^T})+ \M LT^{1/p}(\M\rho+\Psi(T))\nonumber \\
&\quad + \frac{1}{M_{min}^2} \Big( \M \|\tilde{M}\|_{E_1^T} L (\M\rho + \Psi(T))  + 2c_g\|\tilde{M}\|_{E_1^T} (\M \rho+\Psi(T))\Big)+ \frac12\rho .
   \end{align}
The values $T$ and $\rho$ can be choosen in the way, such that the right hand side is smaller than $\rho$. Now the difference between $N$ and $\tilde{M}$ remains to be considered. For $t\in(0,T]$ we have
\begin{align*}
 (N-\tilde{M})(t) = \int_0^t (N'(s)-\tilde{M}'(s)) \, ds = \int_0^t (G(U_0,\tilde{M})-G(U,M))(s) \, ds.
\end{align*}
Hence using Corollary 2.3 from \cite{ClLi} and (\ref{bound}) we get
\begin{align*}
\| N-\tilde{M}\|_{E_1^T} 
&\leq 2c_g (1+p)^{-\frac1p} T^\frac{p+1}{p}. 
\end{align*}
So for 
\begin{equation}\label{Bed2}
     T< \left(\frac{1+p}{(2 c_g)^p}\right)^{\frac{1}{p+1}} \rho^{\frac{p}{p+1}} 
   \end{equation}
this is smaller than $\rho$. 
In the next step we show that $\gamma$ is a contraction. Let $$(\vec{u}_i,M_i)\in \Sigma_{\rho,T}, \qquad \gamma(\vec{u}_i,M_i) = (\vec{v}_i,N_i)\quad \text{for }i=1,2.$$ It holds for $t\in (0,T)$,
\begin{align*}
 (\vec{v}_1-\vec{v}_2)'(t) + \A(u_0)(\vec{v}_1(t)-\vec{v}_2(t)) &= \A(u_0)(\vec{u}_1(t)-\vec{u}_2(t))- \A(u_1(t)) \vec{u}_1(t)  + \A(u_2(t)) \vec{u}_2(t) 
\\&\quad+ f(t,\vec{u}_1,M_1) -f(t,\vec{u}_2,M_2),\\
 (\vec{v}_1-\vec{v}_2)(0)&= 0.
\end{align*}
Similar arguments as above show that
\begin{align*}
 \|\vec{v}_1-\vec{v}_2\|_{Z^T} 
&\leq \M L \|\vec{u}_1-\vec{u}_2\|_{Z^T}(3\M\rho+\|\vec{w}\|_{Y^T})+\M \|f(t,\vec{u}_1, M)-f(t,\vec{u}_2,\tilde{M})\|_{X^T}.
\end{align*}
Again we treat the last term separately. It holds
\begin{align*}
 \|f(t,\vec{u}_1, M_1)-f(t,\vec{u}_2,M_2)\|_{X^T}
&\leq \|f_1(t,\vec{u}_1)-f_1(t,\vec{u}_2)\|_{E_1^T} + \left\| \frac{1}{M_1} f_2(t,\vec{u}_1)-\frac{1}{M_2} f_2(t,\vec{u}_2)\right\|_{E_2^T}
\\& \quad+ \left\|\frac{U_1}{M_1}G(U_1,M_1) -\frac{U_2}{M_2}G(U_2,M_2) \right\|_{E_2^T}.
\end{align*}
We have
$$ \|f_1(t,\vec{u}_1)-f_1(t,\vec{u}_2)\|_{E_1^T} \leq L\|\vec{u}_1-\vec{u}_2\|_{L_p((0,T), Y_{1-\frac1p})}\leq \M LT^{1/p} \|\vec{u}_1-\vec{u}_2\|_{Z^T}.$$
With the use of $\vec{w}$ and the assumptions on $G$ we conclude (all norms here are in $E_2^T$) 
\begin{align*}
 & \left\| \frac{1}{M_1} f_2(t,\vec{u}_1)-\frac{1}{M_2} f_2(t,\vec{u}_2)\right\| + \left\|\frac{U_1}{M_1}G(U_1,M_1) -\frac{U_2}{M_2}G(U_2,M_2) \right\|\\
&\leq \frac{1}{M_{min}^2} \Big[ \|M_2-M_1\|_{E_1^T}( \M L \rho +\frac{M_{min}^2}{4}+ L\Psi(T))  + \|\vec{u}_1-\vec{u}_2\|_{Z^T}(\rho \M L + \M L \|\tilde{M}\|_{E_1^T})\\&\quad + \| M_2U_1 G(U_1,M_1)-M_1U_2 G(U_2,M_2)\| \Big].
\end{align*}
Further
\begin{align*}
 &\| M_2U_1 G(U_1,M_1)-M_1U_2 G(U_2,M_2)\|_{E_2^T}\\
&\leq \M 2c_g\|\vec{u}_1-\vec{u}_2\|_{Z^T} (\rho + \|\tilde{M}\|_{E_1^T}) 
\quad+ \|M_2-M_1\|_{E_1^T}(2c_g (\M \rho+ \Psi(T))  + \|U_0 G(U_0,\tilde{M})\|_{C([0,T],E_2)}). 
\end{align*}
Finally the continuity of $G$ implies
\begin{align*}
 \|N_1-N_2\|_{E_1^T} \leq T^\frac1p c_p c(\|\vec{u}_1-\vec{u}_2\|_{Z^T}+ \|M_1-M_2\|_{E_1^T}).
\end{align*}
Because of the assumptions there exist $(\rho_1,T_1)$, such that (\ref{Bed1}) is smaller than $\rho_1$, (\ref{Bed2}) is satisfied and it holds 
$$\|\gamma(\vec{u}_1,M_1) -\gamma(\vec{u}_2,M_2)\|_{\Sigma_{\rho_1 T_1}} \leq \frac34 \|(\vec{u}_1,M_1)-(\vec{u}_2,M_2)\|_{\Sigma_{\rho_1 T_1}}.$$
Now Banach's fixed point theorem proves the assertion.
\end{proof}

\ \\
\
\hspace{1cm}
\tt
Institute for Applied Mathematics,
Leibniz University Hanover,
Welfengarten 1,\\
D 30167 Hanover,
Germany\\

\end{document}